\crefname{equation}{}{}
\newcommand{\ve}{\varepsilon}
\newcommand{\R}{\mathbb{R}}
\newcommand{\N}{\mathbb{N}}
\newcommand{\eq}[1]{ 
\begin{equation}
\begin{split}
#1 
\end{split}
\end{equation}
}
\renewcommand{\H}{\mathcal{H}}
\renewcommand{\P}{\mathcal{P}}
\renewcommand{\l}{\langle}
\renewcommand{\r}{\rangle}
\newcommand{\parc}[1]{\frac{\partial}{\partial #1}} 
\newcommand{\parcs}[2]{\frac{\partial #1}{\partial #2}}
\newtheorem{remark}{Remark}[section] 
\newtheorem{definition}{Definition}[section] 
\newtheorem{theorem}{Theorem}[section] 
\newtheorem{lemma}{Lemma}[section]
\DeclareMathOperator{\Image}{Im}
\begin{document}

\title{Formal normal form of $A_k$ slow fast systems}
\author{H. Jardón-Kojakhmetov}

\maketitle
\abstract{An $A_k$ slow fast system is a particular type of singularly perturbed ODE. The corresponding slow manifold is defined by the critical points of a universal unfolding of an $A_k$ singularity. In this note we propose a formal normal form of $A_k$ slow fast systems.}

\section{Introduction}
In this note we propose a formal normal form of a particular class of slow fast systems. A slow fast system (SFS) is a singularly perturbed ODE usually written as
\eq{\label{sfs1}
\dot x &= f(x,z,\ve)\\
\ve\dot z &= g(x,z,\ve)
}
where $x\in\R^m$, $z\in\R^n$ and $0<\ve\ll 1$ is a small parameter, and where the over-dot denotes the derivative with respect to a time parameter $t$. Slow fast systems are often used as mathematical models of phenomena which occur in two time scales. Observe that as $\ve$ decreases, the time scale difference between $x$ and $z$ increases. A couple of classical examples of real life phenomena that were modeled by a SFS are the Zeeman's heartbeat and nerve-impulse models \cite{Zeeman1}. For $\ve\neq 0$, we can define a new time parameter $\tau$ by $t=\ve\tau$. With this new time $\tau$ we can write \cref{sfs1} as
\eq{\label{sfs2}
x' &=\ve f(x,z,\ve)\\
z' &= g(x,z,\ve),
}
where the prime denotes derivative with respect to $\tau$. An important geometric object in the study of SFSs is the \emph{slow manifold} which is defined by
\eq{
	S=\left\{ (x,z)\in\R^m\times\R^n \, | \, g(x,z,0)=0 \right\}.
}

When $\ve=0$, the manifold $S$ serves as the phase space of \cref{sfs1} and as the set of equilibrium points of \cref{sfs2}. In the rest of the document, we prefer to work with a SFS written as \cref{sfs2}. Furthermore, to avoid working with an $\ve$-parameter family of vector fields as in \cref{sfs2}, we plug-in into \cref{sfs2}  the trivial equation $\ve'=0$. To be more precise, we treat a $C^\infty$-smooth vector field defined as follows.

\begin{definition}[$A_k$ slow fast system]\label{def:eCDE} Let $k\in\mathbb N$ with $k\geq 2$. An $A_k$ slow fast system (for short $A_k$-SFS) is a vector field $X$ of the form
\eq{
X = \ve(1+\ve f_1)\parc{x_1}+\sum_{i=1}^{k-1}\ve^2f_i\parc{x_i}-\left(G_k-\ve f_k\right)\parc{z}+0\parc{\ve}.
}
where $G_k=z^k+\sum_{i=1}^{k-1}x_iz^{i-1}$ and where each $f_i=f_i(x_1,\ldots,x_{k-1},z,\ve)$ is a $C^\infty$-smooth function vanishing at the origin.
\end{definition}

\begin{remark} The slow manifold associated to an $A_k$-SFS is defined by
\eq{
S=\left\{ (x,z)\in\R^k \, |  \, z^k+\sum_{i=1}^{k-1}x_iz^{i-1}=0 \right\}.
}

The manifold $S$ can be regarded as the critical set of the universal unfolding of a smooth function with an $A_k$ singularity at the origin \cite{Arnold_singularities,Brocker}. Hence the name $A_k$-SFS.
\end{remark}

Observe that the origin is a non-hyperbolic equilibrium point of $X$ and thus, it is not possible to study its local dynamics with the classical Geometric Singular Perturbation Theory \cite{Fenichel}. In this case, a technique called blow-up \cite{DumRou1,DumRou2,Krupa1} is usually applied to desingularize the SFS. This methodology has been successfully used in many cases, e.g. \cite{BKK,Jardon3,Krupa2,Krupa20102841,Szmolyan2001419,vanGils}, where many of these deal with an $A_k$-SFS with fixed $k=2$ or $k=3$. Briefly speaking, the blow-up technique consists in an appropriate change of coordinates under which the induced vector field is regular or has simpler singularities  (hyperbolic or partially-hyperbolic). However, in this work we propose a normal form of $A_k$-SFS to be performed prior to the blow-up, see \cref{prop:nf}. This normalization greatly simplifies the local analysis of $A_k$-SFSs as shown in \cite{JardonThesis,Jardon3}.

\section{Formal normal form of an $A_k$-slow fast system }\label{sec:nf}

We regard the vector field $X$ of \cref{def:eCDE} as $X=F+P$, where $F$ and $P$ are smooth vector fields called ``the principal part'' and ``the perturbation'' respectively. That is
\eq{\label{eqF}
F =\ve\parc{x_1}+\sum_{i=2}^{k-1}0\parc{x_i}-G_k\parc{z}+0\parc{\ve}, \qquad\qquad\qquad
P = \sum_{i=1}^{k-1}\ve^2 f_i\parc{x_i}+\ve f_k\parc{z}+0\parc{\ve}.
}

The idea of the rest of the document is motivated by \cite{Sto10}. In short, we want to formally simplify the expression of $X$ by eliminating the perturbation $P$. The terminology used below is that of \cite{Sto10}.\smallskip

The vector field $F$ is quasihomogeneous  of type $r=(k,k-1,\ldots,1,2k-1)$ and quasidegree $k-1$ \cite{Arnold_singularities,Sto10}. From now on, we fix the type of quasihomogeneity $r$. A quasihomogeneous object of type $r$ will be called $r$-quasihomogeneous.

\begin{definition}[Good perturbation]\label{def:gp} Let $F$ be an $r$-quasihomogeneous vector field of quasidegree $k-1$. A good perturbation $X$ of $F$ is a smooth vector field $X=F+P$, where $P=P(x_1,\ldots,x_{k-1},z,\ve)$ satisfies the following conditions

\begin{itemize}
	\item $P$ is a smooth vector field of quasiorder greater than $k-1$, 
	\item $P=\sum_{i=1}^{k-1}P_i\parc{x_i}+P_k\parc{z}+0\parc{\ve}$, with $P|_{\ve=0}=0$.
\end{itemize}

\end{definition}

\paragraph{Notation}{ By $\P_{\delta}$ we denote the space of $r$-quasihomogeneous polynomials (in $k+1$ variables) of quasidegree $\delta$. By $\H_{\gamma}$ we denote the space of $r$-quasihomogeneous vector fields (in $\R^{k+1}$) of quasidegree $\gamma$ and such that for all $U\in\H_{\delta}$ we have $U=\sum_{i=1}^{k} U_k\parc{x_i}+0\parc{x_{k+1}}$. The formal series expansion of a function $f$ is be denoted by $\hat f$.
}

\begin{definition}[The inner product $\langle \cdot,\cdot\rangle_{r,\delta}$ \cite{Sto10}]\label{def:inner} Let $x=(x_1,\ldots,x_n)$, and $s,q\in\N^n$. Let $f,g\in\P_{\delta}$, that is
      \eq{
      f=\sum_{(r,s)=\delta} f_sx^s,
      }
      
      where $f_s\in\R$, $x^s=x_1^{s_1}\cdots x_n^{s_n}$; and similarly for $g$. Then the inner product  $\langle \cdot,\cdot\rangle_{r,\delta}$ is defined as
       \eq{
      \langle f, g\rangle_{r,\delta}=\sum_{(r,s)=\delta}f_sg_s\frac{(s!)^r}{\delta!},
      }
      
      where $(s!)^r=(s_1!)^{r_1}\cdots(s_n!)^{r_n}$, and where $(r,s)$ denotes the dot product $r\cdot s$. So for monomials one has
        \eq{
        \langle x^s, x^q\rangle_{r,\delta}=\begin{cases}
          \frac{(s_1!)^{r_1}\cdots(s_n!)^{r_n}}{\delta!} & \text{if} \quad s=q \quad \text{with}\quad (s,r)=\delta,\\
          0 & \text{otherwise.}
        \end{cases}
        }

      Accordingly, for vector fields: let $X=\sum_{i=1}^n X_i\parc{x_i}\in\H_\delta$, and $Y=\sum_{i=1}^n Y_i\parc{x_i}\in\H_\delta$. Then
      \eq{\label{def:innervf}
      \langle X,Y\rangle_{r,\delta}=\sum_{i=1}^n \langle X_i,Y_i \rangle_{r,\delta+r_i}.
      }

      \end{definition}

\begin{definition} [The operators $d$, $d^*$ and $\square$ \cite{Sto10}] The operator $d:\H_\gamma \to \H_{\gamma+k-1}$ (associated to $F$) is defined by $d(U)=[F,U]$ for any $U\in\H_{\gamma}$, where $[\cdot,\cdot]$ denotes the Lie bracket. The operator $d^*$ is the adjoint operator of $d$ with respect to the inner product of \cref{def:inner}. This is, given $U\in\H_\gamma$, $V\in\H_{\gamma+k-1}$ we have
\eq{
\langle d(U),V\rangle_{r,\gamma+k-1}=\langle U, d^*(V)\rangle_{r,\gamma}
}

For any quasidegree $\beta>k-1$, the self adjoint operator $\square_{\beta}:\H_\beta\to\H_\beta$ is defined by $\square_\beta(U)=dd^*(U)$ for all $U\in\H_\beta$.

\end{definition}

\begin{definition}[Resonant vector field \cite{Sto10}]\label{def:res} \leavevmode
\begin{itemize}
	\item We say that a vector field $U\in\H_\beta$ is resonant if $U\in\ker \square_\beta$.
	\item A formal vector field is called resonant if all its quasihomogeneous components are resonant.
\end{itemize}
\end{definition}

\begin{definition}[Normal Form \cite{Sto10}] A good perturbation $X=F+R$ of $F$ is a \emph{normal form with respect to $F$} if $R$ is resonant.
	
\end{definition}

It is important to note the following.

\begin{lemma}\label{lemma:ker} $\ker \square_{\beta}=\ker d^*|_{\H_\beta}$.

\end{lemma}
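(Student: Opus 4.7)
The statement is the quasihomogeneous analogue of the standard functional-analytic fact that $\ker(T^*T)=\ker T$ on any inner product space, applied with $T=d^*|_{\H_\beta}$. My plan is simply to transcribe that one-line argument into the present setting, the only nontrivial ingredient being positive definiteness of $\langle\cdot,\cdot\rangle_{r,\delta}$.

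The inclusion $\ker d^*|_{\H_\beta}\subseteq\ker\square_\beta$ is immediate from the definition $\square_\beta=dd^*$: if $d^*(U)=0$ then $\square_\beta(U)=d(0)=0$. For the reverse inclusion, given $U\in\ker\square_\beta$, I will pair the equation $\square_\beta(U)=0$ with $U$ itself in $\langle\cdot,\cdot\rangle_{r,\beta}$. Since $d$ raises quasidegree by $k-1$, one has $d^*(U)\in\H_{\beta-(k-1)}$, and the adjointness relation defining $d^*$ (with first argument $d^*(U)$ and second argument $U$) yields
\[
0 \;=\; \langle dd^*(U),\,U\rangle_{r,\beta} \;=\; \langle d^*(U),\,d^*(U)\rangle_{r,\beta-(k-1)}.
\]
Positive definiteness of $\langle\cdot,\cdot\rangle_{r,\beta-(k-1)}$ on $\H_{\beta-(k-1)}$ then forces $d^*(U)=0$, so $U\in\ker d^*|_{\H_\beta}$.

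The only item that actually needs verification, and which I regard as the (very mild) main obstacle, is positive definiteness of $\langle\cdot,\cdot\rangle_{r,\delta}$. This is transparent from \cref{def:inner}: on $\P_\delta$ the monomials $x^s$ with $(r,s)=\delta$ are pairwise orthogonal and each satisfies $\langle x^s, x^s\rangle_{r,\delta}=(s!)^r/\delta!>0$, so $\langle f,f\rangle_{r,\delta}$ is a positive combination of the squared coefficients of $f$ and vanishes only when $f=0$; the componentwise extension to $\H_\delta$ inherits this. Beyond that, the only bookkeeping point is to track correctly the $(k-1)$-shift in quasidegree under $d^*$, which is already built into its definition.
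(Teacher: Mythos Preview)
Your proof is correct. Both your argument and the paper's are the standard finite-dimensional inner-product-space proof that $\ker(dd^*)=\ker d^*$, resting on adjointness together with positive definiteness of $\langle\cdot,\cdot\rangle_{r,\delta}$. The only cosmetic difference is that the paper phrases the key step via the orthogonal decomposition $\H_{\beta-(k-1)}=\Image d^*\oplus\ker d$ and then observes $d^*U\in\Image d^*\cap\ker d=\{0\}$, whereas you bypass the decomposition and compute directly $0=\langle dd^*U,U\rangle_{r,\beta}=\langle d^*U,d^*U\rangle_{r,\beta-(k-1)}$; these are two equivalent ways to package the same idea.
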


\begin{proof} Let $\alpha=k-1$, then $d:\H_{\gamma}\to\H_{\gamma+\alpha}$ and $d^*:\H_{\gamma+\alpha}\to\H_{\gamma}$. Due to the fact that $d^*$ is the adjoint of $d$, we have the decomposition $\H_\gamma=\Image d^*|_{\H_{\gamma+\alpha}}\oplus\ker d|_{\H_\gamma}$. Now let $U\in\H_{\gamma+\alpha}=\H_\beta$, then $\square_\beta(U)=dd^*(U)=0$ if and only if $d^*U\in\ker d$. Furthermore, $d^*U\in\Image d^*$. That is $d^*U\in \Image d^*\cap\ker d$. However $\Image d^*$ and $\ker d$ are orthogonal. Then $\square_{\beta}(U)=0$ if and only if $d^*U=0$.

\end{proof}

We now recall a result of \cite{Sto10} (Proposition 4.4), we only adapt it for the present context.

\begin{theorem}[Formal normal form \cite{Sto10}]\label{teo:Sto} Let $X=F+P$ be a good perturbation of $F$ as in definition \ref{def:gp}. Then there exists a formal diffeomorphism $\hat \Phi$ such that $\hat\Phi$ conjugates $\hat X$ to a vector field $F+R$, where $R$ is a resonant formal vector field in the sense of definition \ref{def:res}.

\end{theorem}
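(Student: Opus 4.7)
The plan is to follow the classical Poincaré--Dulac/Stolovitch scheme: expand $\hat P$ in $r$-quasihomogeneous components and, quasidegree by quasidegree, absorb the non-resonant part of the perturbation into a formal change of coordinates. Write $\hat P=\sum_{\beta>k-1} P_\beta$ with $P_\beta\in\H_\beta$ (the sum starts strictly above $k-1$ because of the good-perturbation assumption in \cref{def:gp}). Applying the orthogonal decomposition used in the proof of \cref{lemma:ker}, namely $\H_\beta=\ker d^*|_{\H_\beta}\oplus\Image d|_{\H_{\beta-(k-1)}}$, I decompose each component as $P_\beta=R_\beta+d(U_\beta)$ with $R_\beta$ resonant and $U_\beta\in\H_{\beta-(k-1)}$ (chosen orthogonal to $\ker d$ so that $U_\beta$ is uniquely determined).

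The second step is the inductive elimination. Assuming, after finitely many coordinate changes, that $X$ has been brought to the form $F+R^{(n)}+P^{(n)}$ with $R^{(n)}$ a resonant polynomial of quasidegree $\le\beta_n$ and $P^{(n)}$ of quasiorder $>\beta_n$, I take the lowest order component $P^{(n)}_{\beta_{n+1}}$ and split it as $R_{\beta_{n+1}}+d(U)$. Conjugating by the time-one flow $\Phi_U$ of $-U$, the Lie series $(\Phi_U)_*(F+R^{(n)}+P^{(n)})=F+R^{(n)}+P^{(n)}-[F,U]-[R^{(n)},U]-[P^{(n)},U]+\tfrac12[\,[\cdot,U],U]+\cdots$ cancels $d(U)=[F,U]$ at quasidegree $\beta_{n+1}$ and produces only terms of strictly larger quasidegree, so $R_{\beta_{n+1}}$ is added to the resonant part and the new perturbation has quasiorder $>\beta_{n+1}$. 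Iterating over the increasing sequence of quasidegrees yields a sequence of formal diffeomorphisms $\Phi_{\beta_0},\Phi_{\beta_1},\ldots$ whose infinite composition $\hat\Phi$ is a well-defined formal diffeomorphism: at any fixed quasidegree only finitely many of the $\Phi_{\beta_n}$ modify the Taylor coefficients. The limit conjugates $\hat X$ to $F+\hat R$ with $\hat R=\sum R_{\beta_n}$ resonant in the sense of \cref{def:res}.

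The step that requires the most bookkeeping, rather than being a deep obstacle, is ensuring that the good-perturbation structure of \cref{def:gp} is preserved along the induction. Two things must be checked: first, that each $U_{\beta_n}$ lies in $\H$, i.e.\ has no $\partial/\partial\ve$ component, which follows because $P_{\beta_n}\in\H_{\beta_n}$ and the decomposition is internal to $\H$; second, that the new perturbation still vanishes on $\{\ve=0\}$. The latter uses that $F$ has only its $x_1$-component and its $z$-component without $\ve$-factors matching $G_k$, together with the observation that the $\ve$-flatness of the perturbation forces the corresponding component of $U_{\beta_n}$ to be $\ve$-flat as well; Lie brackets $[F,U_{\beta_n}]$ and iterated brackets then remain flat at $\ve=0$ in the $x_i$-directions and carry an $\ve$-factor in the $z$-direction coming from $U_{\beta_n}$. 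Once this structural point is verified, the argument is essentially a transcription of Proposition 4.4 of Stolovitch's paper into the present setting, and the existence of the formal normalizing diffeomorphism follows.
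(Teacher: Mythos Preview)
The paper does not prove this theorem at all: it is quoted verbatim from Stolovitch (\cite{Sto10}, Proposition~4.4), with the sentence ``We now recall a result of \cite{Sto10} (Proposition 4.4), we only adapt it for the present context'' immediately preceding the statement. So there is nothing in the paper to compare your argument against.

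That said, your reconstruction is the standard Poincar\'e--Dulac scheme in the quasihomogeneous setting and is, in outline, exactly how the cited result is proved. Two remarks. First, the theorem as stated only claims that $R$ is resonant, not that $F+R$ is again a good perturbation; the preservation of the condition $P|_{\ve=0}=0$ that you worry about in your last paragraph is not needed to establish \cref{teo:Sto} itself (it \emph{is} needed, and used, in the paper's proof of \cref{prop:nf}, but that is a separate statement). Second, your justification of that preservation is the weakest part of the write-up: the sentence ``the $\ve$-flatness of the perturbation forces the corresponding component of $U_{\beta_n}$ to be $\ve$-flat as well'' is asserted rather than argued, and would require showing that the orthogonal decomposition $\H_\beta=\ker d^*\oplus\Image d$ respects the subspace $\{V\in\H_\beta:V|_{\ve=0}=0\}$. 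If you want to include this structural point, it deserves an explicit computation; if you only want to prove the theorem as stated, you can simply drop that paragraph.
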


Finally, we present our result. In short, we prove that the resonant vector field $R$ in \cref{teo:Sto} associated to $F$ given by \cref{eqF} is $R=0$. 

\begin{theorem}\label{prop:nf} Let $X=F+P$ be a good perturbation of the vector field
\eq{
F=\ve\parc{x_1}+\sum_{i=2}^{k-1}0\parc{x_i}-\left( z^k+\sum_{j=1}^{k-1}x_jz^{j-1} \right)\parc{z}+0\parc{\ve}.
}
Then, there exists a formal diffeomorphism $\hat\Phi$ that conjugates $\hat X$ with $F$, this is $\hat\Phi_*\hat X=F$.
\end{theorem}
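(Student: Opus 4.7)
The plan is to apply \cref{teo:Sto} to $\hat X = F + \hat P$ and show that the resonant remainder $R$ in $\hat\Phi_* \hat X = F + R$ vanishes. Decomposing $R = \sum_{\beta > k-1} R_\beta$ into quasihomogeneous components, it suffices to prove each $R_\beta = 0$.

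First I would verify that the good-perturbation subspace $\mathscr{G}_\bullet := \{V \in \H_\bullet : V|_{\ve=0}=0\}$ is preserved by the iterative construction underlying \cref{teo:Sto}. A direct computation, applying the identity $[fX, gY] = fg[X,Y] + f(Xg)Y - g(Yf)X$ with $\tilde U \in \H$, gives
\[
[F, \ve\,\tilde U] = \ve^{2}\,[\partial_{x_1}, \tilde U] - \ve\,[G_k\,\partial_z, \tilde U],
\]
showing that $d = [F,\cdot]$ maps $\mathscr{G}$ into $\mathscr{G}$. The Stolovitch iteration can therefore be arranged with every generator chosen in $\mathscr{G}$, so the normalizing diffeomorphism $\hat\Phi$ restricts to the identity on $\{\ve=0\}$. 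This forces $R|_{\ve=0}=0$, i.e.\ $R_\beta \in \mathscr{G}_\beta$ for every $\beta$. Combined with \cref{def:res} and \cref{lemma:ker}, each component satisfies $R_\beta \in \mathscr{G}_\beta \cap \ker d^*|_{\H_\beta}$.

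The theorem then reduces to the vanishing $\mathscr{G}_\beta \cap \ker d^*|_{\H_\beta} = \{0\}$ for every $\beta > k-1$. To prove this, take $V \in \mathscr{G}_\beta \cap \ker d^*$ and test the orthogonality $\langle V, [F,U]\rangle_{r,\beta} = 0$ against the brackets
\[
[F, P\,\partial_{x_j}] = \bigl(\ve \partial_{x_1} P - G_k \partial_z P\bigr)\,\partial_{x_j} + P z^{j-1}\,\partial_z \qquad (j=1,\ldots,k-1),
\]
\[
[F, P\,\partial_z] = \bigl(\ve \partial_{x_1} P - G_k \partial_z P + P\,\partial_z G_k\bigr)\,\partial_z,
\]
as $P$ ranges over quasihomogeneous monomials. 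Since $V$ is $\ve$-divisible, pairings with the $\ve$-free part of $[F,U]$ vanish automatically; the $\ve$-carrying contributions, generated by the $\ve\,\partial_{x_1}$ summand of $F$, pair non-trivially with $V$ through the bilinear form of \cref{def:inner} and yield explicit relations among its coefficients. An induction on the $\ve$-order of $V$, using the versality relations $\partial_{x_i} G_k = z^{i-1}$ (which furnish a Milnor-algebra basis $\{1,z,\ldots,z^{k-2}\}$ for $z^k$), forces every coefficient of $V$ to vanish.

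The main technical obstacle lies in this last step: one must carefully synchronize the $\ve$-shift produced by $\ve\,\partial_{x_1}$ with the $G_k$-coupling $\sum_i U^{x_i} z^{i-1}$ appearing in $[F,U]^z$, so that every monomial in a component of $V$ is targeted by some test bracket. Making this synchronization precise---and organizing the induction on $\ve$-order so that the relations close---is where the bulk of the computation lies.
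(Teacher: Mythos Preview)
Your plan is correct and follows essentially the same route as the paper: reduce via \cref{teo:Sto} and \cref{lemma:ker} to proving $\{V\in\H_\beta:V|_{\ve=0}=0\}\cap\ker d^*=\{0\}$, then kill the coefficients of an $\ve$-divisible $V$ by testing $\langle V,[F,U]\rangle=0$ against monomial generators and inducting on the $\ve$-exponent, the key point being that the only $\ve$-raising piece of $F$ is $\ve\,\partial_{x_1}$. The paper organises the same computation through an explicit matrix representation of $d^*$ (which decouples the system into scalar equations $F^*(P_j)=0$ before running the $\ve$-induction), whereas you work directly with the brackets $[F,Q\,\partial_{x_j}]$ and $[F,Q\,\partial_z]$; your extra step verifying that the normal-form iteration preserves $\{P|_{\ve=0}=0\}$ is a point the paper takes for granted.
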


\begin{proof}
From theorem \ref{teo:Sto} and lemma \ref{lemma:ker} we will show that if $P\in\ker d^*|_{\H_{\geq k}}$ then $P=0$. Let us start by rewriting $d^*(P)$ in a more workable format. 

To simplify the notation, let $\alpha\geq k$, $P\in\H_\alpha$, $\beta=\alpha-k+1$, and $Q\in\H_{\beta}$; and let $x=(x_1,\ldots,x_{k-1},z,\ve)=(x_1,\ldots,x_{k-1},x_k,x_{k+1})$. If $D$ is an operator, its adjoint with respect to the inner product \cref{def:inner} is always denoted as $D^*$.

We start with the inner product (\cref{def:inner})
\eq{
\l d(Q),P\r_{r,\alpha}=\l Q,d^*(P)\r_{r,\beta}.
}

We can write $d(Q)=\sum_{i=1}^{k+1} F(Q_i)-Q(F_i)$, where $F(Q_i)=\sum_{j=1}^{k+1} F_j\parcs{Q_i}{x_j}$ and similarly for $Q(F_i)$, then
\eq{\label{dd1}
\l d(Q),P \r_{r,\alpha} &= \sum_{i=1}^{k+1} \l F(Q_i)-Q(F_i), P_i \r_{r,\beta} = \sum_{i=1}^{k+1} \l F(Q_i),P_i\r_{r,\alpha+r_i} - \l Q(F_i),P_i \r_{r,\alpha+r_i} \\
							   &= \sum_{i=1}^{k+1} \l Q_i, F^*(P_i)\r_{r,\beta+r_i}   - \l Q(F_i),P_i \r_{\alpha+r_i} = \sum_{i=1}^{k+1} \l Q_i, F^*(P_i)\r_{r,\beta+r_i}   - \sum_{j=1}^{k+1} \l Q_j,\left( \parcs{F_i}{x_j} \right)^*(P_i) \r_{\beta+r_j}\\
							   &= \sum_{i=1}^{k+1} \l Q_i, F^*(P_i)                    - \sum_{j=1}^{k+1} \left( \parcs{F_j}{x_i} \right)^*(P_j) \r_{\beta+r_i} 
}
Comparing \cref{dd1} to $\l Q, d^*(P)\r_{r,\beta}$ we can write 
\eq{\label{eq:d}
d^*(P)=\begin{bmatrix}
	F^*-\left( \parcs{F_1}{x_1 } \right)^* & -\left( \parcs{F_2}{x_1 } \right)^* & \cdots & -\left( \parcs{F_{k+1}}{x_1 } \right)^*\\[3ex]
	-\left( \parcs{F_1}{x_2 } \right)^* & F^*-\left( \parcs{F_2}{x_2 } \right)^* & \cdots & -\left( \parcs{F_{k+1}}{x_2 } \right)^*\\
	\vdots & \vdots & \ddots & \vdots\\
	-\left( \parcs{F_1}{x_{k+1} } \right)^* & -\left( \parcs{F_2}{x_{k+1} } \right)^* & \cdots & F^*-\left( \parcs{F_{k+1}}{x_{k+1} } \right)^*
\end{bmatrix}
	\begin{bmatrix}
		P_1\\
		P_2\\
		\vdots\\
		P_{k+1}
	\end{bmatrix}.
}

Plugging in the expressions of $F$ and $P$ into \eqref{eq:d} we get
\eq{\label{eq:d2}
d^*(P)=\begin{bmatrix}
	F^* & 0 & \cdots & 0 & 1 & 0\\
	0 & F^* & \cdots & 0 & z^* & 0 \\
	\vdots & \vdots & \ddots & \vdots & \vdots \\
	0 & 0 & \cdots & F^* & \left(z^{k-1}\right)^* & 0 \\
	0 & 0 & \cdots & 0 & F^*+Z^* &0\\
	-1 & 0 & \cdots & 0 & 0 & F^*
\end{bmatrix}
	\begin{bmatrix}
		P_1\\
		P_2\\
		\vdots\\
		P_{k-1}\\
		P_k\\
		0
	\end{bmatrix}=0.
}
where $Z^*=\left(kz^{k-1}+\sum_{i=2}^{k-1}(i-1)x_iz^{i-2}\right)^*$. Now note that \eqref{eq:d2} implies  $F^*(P_j)=0$ for all $j=2,\ldots,k-1$ and $P_1=P_k=0$.
\begin{remark} For $k=2$, the result is trivial: we have $F=\ve\parc{x_1}-(z^2+x_1)\parc{z}+0\parc{\ve}$, and therefore $d^*(P)=0$ is written as
	\eq{
	d^*(P)=\begin{bmatrix}
		F^* & 1 & 0 \\
		0 & F^*+2z^* & 0\\
		-1 & 0 & F^*
	\end{bmatrix}
	\begin{bmatrix}
		P_1\\
		P_2\\
		0
	\end{bmatrix}=0,
	}
	which immediately implies $P_1=P_2=0$.
\end{remark}

Now, we study $F^*(P_j)=0$. Recall that $P=P(x_1,\ldots,x_{k-1},z,\ve)$ is not any vector field, but it has the property that $P(x_1,\ldots,x_{k-1},z,0)=0$. That is, we can write
\eq{
P=\sum_{i=1}^{k-1}\ve \bar P_i\parc{x_i}+\ve\bar P_k\parc{z}+0\parc{\ve},
}
where $\bar P_j\in\P_{\alpha+r_j-2k+1}$. This is because the (quasihomogeneous) weight of $\ve$ is $2k-1$. Now, since it is complicated to work with the adjoint, we first rewrite the problem $F^*(\ve\bar P_j)=0$. We then prove that  $F^*(\ve\bar P_j)=0$ implies that $\bar P_j=0$. 

Note that $F^*(\ve\bar P_j)=0$ is equivalent to $\l Q, F^*(\ve\bar P_j)\r_{\alpha+r_j-k+1}=0 $ for all $Q\in\P_{\beta+r_j}$. Next, we use the definition of $F^*$ that is 
\eq{\label{auu0}\l Q, F^*(\ve\bar P_j)\r_{r,\beta+r_j}=\l F(Q),\ve\bar P_j\r_{r,\alpha+r_j}=0.}

We will now show that if $\l F(Q),\ve\bar P_j\r_{r,\alpha+r_j}=0$ for all $Q\in\P_{\beta+r_j}$, then $\bar P_j=0$. Note that by \cref{auu0}, this is the same as proving that $F^*(\ve\bar P_j)=0$ implies $\bar P_j=0$.

Start by choosing an element $x^q$ of the basis of $\P_{\beta+r_j}$, this is
\eq{
x^q=x_1^{q_1}\cdots x_{k-1}^{q_{k-1}}z^{q_k}\ve^{q_{k+1}}, \qquad (r,q)=\beta+r_j.
}

Then we have
\eq{
F(x^q)=q_1x_1^{q_1-1}\cdots x_{k-1}^{q_{k-1}}z^{q_k}\ve^{q_{k+1}+1}-\left( z^k +\sum_{i=1}^{k-1}x_iz^{i-1} \right)q_k x_1^{q_1}\cdots x_{k-1}^{q_{k-1}}z^{q_k-1}\ve^{q_{k+1}}.
}

Let us write $\ve\bar P_j\in\P_{\alpha+r_j}$ as
\eq{
\ve \bar P_j =\ve\sum_{(r,p)=\alpha+r_j-2k+1} a_p x_1^{p_1}\cdots x_{k-1}^{p_{k-1}}z^{p_k}\ve^{p_{k+1}},
}
where $a_p\in\R$. We now proceed by recursion on the exponent of $\ve$. Let $q_{k+1}=0$, then the inner product $\l F(Q),\ve\bar P_j\r_{\alpha+r_j}$ has only one term since $F(Q)$ has only one monomial containing $\ve$. That is
\eq{\label{eq:aux1}
\l F(Q),\ve\bar P_j\r_{\alpha+r_j}|_{q_{k+1}=0}=\l q_1x_1^{q_1-1}\cdots x_{k-1}^{q_{k-1}}z^{q_k}\ve^{}, \ve a_p x_1^{p_1}\cdots x_{k-1}^{p_{k-1}} z^{p_k}\r_{r,\alpha+r_j}=0.
}

We naturally consider $q_1>0$. If $q_1=0$, then the equality is automatically satisfied. Recalling the definition \ref{def:inner} of the inner product, the  equality \eqref{eq:aux1} means that
\eq{
\l q_1x_1^{q_1-1}\cdots x_{k-1}^{q_{k-1}}z^{q_k}\ve, \ve a_p x_1^{p_1}\cdots x_{k-1}^{p_{k-1}} z^{p_k}\r_{r,\alpha+r_j}=q_1 a_p \frac{(q!)^r}{(\alpha+r_j)!}=0,
}
and therefore from \eqref{eq:aux1} we have
\eq{\label{eq:coef1}
a_p=a_{q_1-1,p_2,\ldots,p_k,1}=0,  
}
for all  $q_1>0, \, p_2,\ldots,p_k\geq 0$ (naturally, also satisfying the degree condition $(r,p)=\alpha+r_j$). Next, let $q_{k+1}=1$. Then
\eq{
F(x^q)=q_1x_1^{q_1-1}\cdots x_{k-1}^{q_{k-1}}z^{q_k}\ve^{2}-\left( z^k +\sum_{i=1}^{k-1}x_iz^{i-1} \right)q_k x_1^{q_1}\cdots x_{k-1}^{q_{k-1}}z^{q_k-1}\ve^{}.
}

Once again, the inner product $\l F(Q),\ve\bar P_j\r_{r,\alpha+r_j}$ has only one term, now this is due to the fact that all coefficients $a_p$ of monomials containing $\ve$ are zero due to \eqref{eq:coef1}. Then
\eq{
\l F(Q),\ve\bar P_j\r_{\alpha+r_j}|_{q_{k+1}=1}=\l q_1x_1^{q_1-1}\cdots x_{k-1}^{q_{k-1}}z^{q_k}\ve^{2}, \ve a_p x_1^{p_1}\cdots x_{k-1}^{p_{k-1}} z^{p_k}\ve\r_{r,\alpha+r_j}=0.
}

Therefore, similarly as above, we have the condition
\eq{\label{eq:coef2}
a_p=a_{q_1-1,p_2,\ldots,p_k,2}=0,  
}
for all  $q_1>0, \, p_2,\ldots,p_k\geq 0$ (naturally, also satisfying the degree condition $(r,p)=\alpha+r_j$). By recursion arguments, assume $q_{k+1}=n$ and that all the coefficients
\eq{
a_p=a_{p_1,p_2,\ldots,p_k,m}=0, \qquad \forall m\leq n.
}

Then again the inner product $\l F(Q),\ve\bar P_j\r_{r,\alpha+r_j}$ has only one term, namely
\eq{
\l F(Q),\ve\bar P_j\r_{\alpha+r_j}|_{q_{k+1}=n}=\l q_1x_1^{q_1-1}\cdots x_{k-1}^{q_{k-1}}z^{q_k}\ve^{n+1}, \ve a_p x_1^{p_1}\cdots x_{k-1}^{p_{k-1}} z^{p_k}\ve^n\r_{r,\alpha+r_j}=0.
}

The latter then implies
\eq{
a_p=a_{q_1-1,p_2,\ldots,p_k,n+1}=0.
}

This finishes the proof of $\l F(Q),\ve\bar P_j\r_{r,\alpha+r_j}=0$ implies $\bar P_j=0$. 
\end{proof}

\begin{remark} \Cref{prop:nf} together with Borel's lemma \cite{Brocker}, imply that an $A_k$-SFS $X=F+P$ is \emph{smoothly} conjugate to a smooth vector field $Y=F+H$ where $H$ is flat at the origin. The benefits of this normal form are exploited in \cite{JardonThesis,Jardon3}.
\end{remark}

\section*{Acknowledgments} The author gratefully acknowledges Henk Broer, Robert Roussarie, and Laurent Stolovitch for fruitful discussions and valuable comments and suggestions. This work is partially supported by a CONACyT postgraduate grant.

\bibliographystyle{plain}
\bibliography{phdbib}

\end{document}